\documentclass{article}  

\usepackage{amsmath, amssymb, amsthm}

\usepackage{epsfig}   
\usepackage{xspace}
\usepackage{setspace}
\usepackage[usenames]{color}
\usepackage[latin1]{inputenc}
\usepackage[american]{babel}
\hyphenation{mono-ton-icity}

\usepackage{stmaryrd}

\usepackage[numbers,sort&compress,longnamesfirst]{natbib}             
\bibliographystyle{abbrvnat}          
\shortcites{Priezzhev1996}

\newcommand{\DIR}{\textsc{dir}}

\newcommand{\INF}{\textsc{inf}}
\newcommand{\CON}{\textsc{con}}

\newcommand{\ARR}{\textsc{arr}}
\newcommand{\NEXT}{\textsc{next}}
\newcommand{\MINUS}{\textsc{minus}}

\newcommand{\ai}{A^{\hspace*{-.05\baselineskip}(i)} }
\newcommand{\ax}[1]{A^{\hspace*{-.05\baselineskip}(#1)} }
\newcommand{\ee}{\varepsilon}

\newcommand{\0}{{\ensuremath{ \mathbf{0}}}}

\newcommand{\x}{{\ensuremath{ \mathbf{x}}}}
\newcommand{\y}{{\ensuremath{ \mathbf{y}}}}
\newcommand{\z}{{\ensuremath{ \mathbf{z}}}}
\newcommand{\A}{{\ensuremath{ \mathbf{A}}}}

\newcommand{\ie}{i.\,e.\xspace}

\newtheorem{thm}{Theorem}  
\newtheorem{lem}[thm]{Lemma}

\newcommand{\thmref}[1]{Theorem~\ref{thm:#1}}
\newcommand{\lemref}[1]{Lemma~\ref{lem:#1}}
\newcommand{\lemrefs}[2]{Lemmas~\ref{lem:#1} and~\ref{lem:#2}}

\newcommand{\secref}[1]{Section~\ref{sec:#1}}

\newcommand{\ineq}[1]{inequality~\eqref{eq:#1}}
\newcommand{\eq}[1]{equation~\eqref{eq:#1}}
\newcommand{\eqs}[2]{equations~\eqref{eq:#1} and~\eqref{eq:#2}}

\newcommand{\Eqs}[2]{Equations~\eqref{eq:#1} and~\eqref{eq:#2}}

\def\mod{\operatorname{mod}}

\DeclareSymbolFont{AMSb}{U}{msb}{m}{n}
\newcommand{\N}{{\mathbb{N}}}
\newcommand{\Q}{{\mathbb{Q}}}
\newcommand{\Z}{{\mathbb{Z}}}
\newcommand{\R}{{\mathbb{R}}}

\allowdisplaybreaks[2]   

\sloppy                


\begin{document}

\title{\Large Deterministic Random Walks on Regular Trees}
\author{\qquad Joshua Cooper\thanks{LeConte College,
                               Department of Mathematics,
                               University of South Carolina,
                               1523 Greene Street,
                               Columbia, SC 29208, USA}\qquad \\
        \and
        \qquad Benjamin Doerr\thanks{Department 1: Algorithms and Complexity,
                                Max-Planck-Institut f\"ur Informatik,
				Campus E1 4, 66123 Saarbr\"ucken, Germany}\qquad \\
        \and
        \quad Tobias Friedrich\footnotemark[2]\qquad \\
        \and
        \qquad Joel Spencer\thanks{Courant Institute of Mathematical Sciences,
                              New York University,
                              251 Mercer Street,
                              New York, NY 10012-1185, USA}\qquad}
\date{}

\maketitle

\begin{abstract}
    Jim Propp's rotor router model is a deterministic analogue of a random walk on a
    graph. Instead of distributing chips randomly, 
    each vertex serves its neighbors
    in a fixed order. 
    
    Cooper and Spencer (Comb.~Probab.~Comput.\ (2006)) show a remarkable 
    similarity of both models. If an (almost) arbitrary population of chips is 
    placed on the vertices of a grid $\Z^d$ and does a simultaneous walk in the 
    Propp model, then at all times and on each vertex, the number of chips 
    on this vertex deviates from the expected number the random walk would have gotten there
    by at most a constant. This constant is independent of the starting 
    configuration and the order in which each vertex serves its neighbors.

    This result raises the question if all  graphs do have this property. With 
    quite some effort, we are now able to answer this question negatively. For 
    the graph being an infinite $k$-ary tree ($k \ge 3$), we show that for any 
    deviation $D$ there is an initial configuration of chips such that after 
    running the Propp model for a certain time there is a vertex with at least 
    $D$ more chips than expected in the random walk model. However, to achieve a 
    deviation of $D$ it is necessary that at least $\exp(\Omega(D^2))$ vertices 
    contribute by being occupied by a number of chips not divisible by $k$ at a certain time.
\end{abstract}

\section{Introduction}

The rotor-router model is a simple deterministic process first
introduced by~\citet{Priezzhev1996} and later popularized by Jim Propp.
It can be viewed as an attempt to derandomize random walks on graphs.
So far, the ``Propp machine'' has been studied primarily on infinite grids $\Z^d$.
There, each vertex $x\in\Z^d$ is equipped with a ``rotor'' together with a cyclic permutation
(called a ``rotor sequence'') of the $2d$ cardinal directions of $\Z^d$.
While a chip (particle, coin, \ldots) performing a random walk leaves 
a vertex in a random direction,
in the Propp model it always goes in the direction the rotor is pointing.
After a chip is sent, the rotor is rotated according to the fixed rotor sequence.
This rule ensures that chips are distributed highly evenly among the neighbors of a vertex.

The Propp machine has attracted considerable attention recently.  It has been shown
that it closely resembles a random walk in several respects.
The first results were due to \citet{Levine2} (and later \citet{LevineTree})
who compared random walk and Propp machine in an
\emph{aggregating model} called Internal Diffusion-Limited Aggregation (IDLA).

\citet{CooperSpencer} compared both models
in terms of the \emph{single vertex discrepancy}.
Apart from a technicality, they place
arbitrary numbers of chips on the vertices. Then they run  the Propp machine on
this initial configuration for a certain number of rounds. A round consists of
each chip (in arbitrary order) performing one move as directed by the Propp machine.
For the resulting chip arrangement, they compare the number of chips at each vertex
with the expected number of chips that the random walk would have put there,
starting from the same initial configuration and running for the same time.
Cooper and Spencer showed that for all grids $\Z^d$, these differences can be
bounded  by a constant $c_d$ independent of the initial setup (in particular,
the total number of chips) and the run-time.
For the case $d=1$, that is, the graph being the infinite path,
the optimal constant $c_1$ is approximately $2.29$ \citep{1DPropp},
for the two-dimensional grid it is $c_2\approx 7.87$ \citep{2DPropp}.

This raises the question whether the Propp machine on all graphs simulates the 
random walk that well. In this work, we show that the infinite $k$-regular tree 
behaves different. We prove that here arbitrarily large discrepancies can result 
from suitable initial configurations. However, to obtain a discrepancy of $D$, 
at least $\exp(\Omega(D^2))$ vertices have to participate by being occupied by a number of chips not divisible by $k$ at 
some time.

While the work cited above and ours in this paper primarily aims at 
understanding random walks and their deterministic counterparts from a 
foundations perspective, we would like to mention that meanwhile the rotor router 
mechanism also led to improvements in applications. An example is the 
quasirandom analogue of the randomized rumor spreading protocol to broadcast 
information in networks \cite{BC}.


\section{Preliminaries}
\label{sec:preliminaries}
\label{sec:notations}

To bound the single vertex discrepancy between the Propp machine and a random walk
on the $k$-regular tree we first introduce several requisite definitions and notational conventions.

Let $G=(V,E)$ be the infinite $k$-regular tree, also known as the Cayley tree and the Bethe lattice.
We fix an arbitrary node to be its \emph{origin}~$\0$.
$|\x|$ denotes the distance between the origin and vertex $\x$.

We first describe the \emph{Propp machine} in detail. Each vertex $\x$ is equipped with a rotor pointing to one of its neighbors. If $\A$ is the `direction' the rotor is pointing to, then we denote this neighbor by $\x + \A$. This notation is in analogy to the one used in the grid case. Though we keep the additive notation, here the directions would rather be the generators of the nonabelian group
generated by the expression $\langle \{d_i\}_{i=1}^k \, | \, d_i^2 = 1 \rangle$, whose Cayley graph is an infinite $k$-regular tree. Denote by $\DIR := \{d_1, \ldots, d_k\}$ the set of generators (``directions'').

Each vertex also holds a rule describing how the rotor is moved. This rule is 
encoded by a cyclic permutation of the directions called \emph{rotor sequence}. 
Again, if the rotor of vertex $\x$ is pointing in direction $\A$, then we denote 
by $\NEXT(\A) := \NEXT(\A,\x)$ the new direction after one move. Since there 
will be no danger of confusion, we will always omit the explicit mention of the 
vertex. Nevertheless, we should stress that we do not require all vertices to 
have the same rotor sequence.

Chips move according to the rotors. That is, if at a certain time $t$ there is a 
single chip on vertex $\x$ (with rotor pointing to $\A$), then at time $t+1$ 
this chips is on vertex $\x + \A$, the neighbor the rotor pointed to, and the 
rotor is updated to point in direction $\NEXT(\A)$. If there are more than one 
chip on a vertex, they all move in one round, but applying the rotor principle 
one after the other (in an arbitrary order---we do not care about individual 
chips but only about the number of chips present on each vertex).

With these rules, the Propp machine describes a process that is fully 
determined by the initial setting of the chips and the positions of the rotors. 
For all $\x \in V$ and $t \in \N_0$, we denote by
$f(\x,t)$ the number of chips on vertex~$\x$ and
$\ARR(\x,t)$ the direction of the rotor associated with~$\x$ after
$t$ steps of the Propp machine.  Hence $f(\cdot,0)$ and $\ARR(\cdot,0)$ describe 
the initial configuration, which determines all other values of $f$ and $\ARR$.

For completeness, let us brief{}ly review the \emph{random walk} model. Here, we 
do not have rotors. Instead, in each round, each chip independently and 
uniformly at random chooses a neighbor of his current position and moves to 
there. This process is described by the initial setting of the chips together 
with all random decisions made.

However, since we shall be only interested in the expected number of chips on 
the vertices, we may instead regard the following \emph{linear 
machine}~\cite{CooperSpencer}.
Here, in each time step every vertex splits its pile of chips evenly.
When a pile of $\ell$ chips splits evenly on some vertex,
$\ell/k$ chips go to each neighbor.
By the harmonic property of random walks, the (possibly non-integral) number of chips
at vertex~$\x$ at time~$t$ is exactly the expected number of chips
in the random walk model. 

Note that the linear machine again is a deterministic process fully described by 
the initial numbers of chips on each vertex. Given such an initial 
configuration, we denote by $E(\x,t)$ the (fractional) number of chips at time 
$t$ on vertex $\x$. Note that, again, $E(\cdot,0)$ is just this initial 
configuration. Note further that $E(\x,t)=\tfrac{1}{k} \sum_{\A\in\DIR} 
E(\x+\A,t-1)$ for all $t \in \N$ and $\x \in V$.

To compare the Propp machine with the linear machine, we start both with identical settings of the chips. Hence we have $f(\x,0) = E(\x,0)$ for all $\x \in V$.
A \emph{configuration} describes the current state of  machine we regard. A 
configuration of the Propp machine assigns to each vertex $\x\in V$ its current 
(integral) number of chips and the current direction of the rotor.
For the linear machine, a configuration is a simple mapping 
$V \rightarrow \Q_{\geq0}$, describing the (fractional) number of chips on each vertex.

As pointed out in the introduction, there is one limitation without which
neither the results of \cite{CooperSpencer,1DPropp,2DPropp} nor our results hold.  Note that
since $G$ is a bipartite graph, chips that start on even vertices never
mix with those starting on odd vertices.  It looks as if we are playing
two noninteracting games at once.  However, this is not true. Chips in different bipartition classes
may affect each other through the rotors.   We therefore require the initial configuration
to have chips only on \emph{one} class of the bipartition. Without loss of generality, we consider
only even initial configurations, \ie, chip configurations supported on vertices at an even distance from the origin.

To analyze the behavior of the linear machine, we need the following notation.
By $H(x,t)$ we denote the probability that a chip from a vertex
with distance $x$ to the origin arrives at the origin
after~$t$ random steps (``at time~$t$'')
in a simple random walk.
Then,
\begin{equation}
  H(x,t)=k^{-t} n(x,t) \label{eq:prob}
\end{equation}
with $n(x,t)$ counting the number of paths of length $t$ between two vertices
at distance~$x$ on the infinite $k$-regular tree.
It is easy to verify the following properties of $n(x,t)$:
\begin{equation}
\begin{minipage}[c]{200pt}
\vspace{-\baselineskip}
\begin{align}
    n(0,0) =&\, 1,\notag\\
    n(x,0) =&\, 0 \text{\ \ for all $x\geq1$,}\notag\\
    n(0,t) =&\, k n(1,t-1) \text{\ \ for all $t\geq1$,}\notag\\
    n(x,t) =&\, n(x-1,t-1) + (k-1) n(x+1,t-1) \text{\ \ for all $x,t\geq1$.}\notag
\end{align}
\end{minipage}
\label{eq:n}
\end{equation}

Finally, we write $\x \sim t$ to mean that $|\x| \equiv t \pmod{2}$.


\section{Mod-$k$-forcing Theorem}
\label{sec:lower}

For a deterministic process like the Propp machine, it is obvious that the
initial configuration (that is, the location of each chip and the direction of
each rotor), determines all subsequent configurations. The following theorem
shows a partial converse, namely that (roughly speaking) we may prescribe the
number of chips modulo $k$ on all vertices at all times by finding an appropriate initial
configuration. An analogous result for the one-dimensional
Propp machine has been shown in \cite{1DPropp}.

\begin{thm}[Mod-$k$-forcing Theorem]
    For all initial directions of the rotors
    and any $\pi\colon V\times\N_0\to$ $\{0,1,\ldots,(k-1)\}$ with
    $\pi(\x,t)=0$ for all~$\x\not\sim t$, there is an initial even configuration
    $f(\x,0)$ that results in subsequent configurations satisfying $f(\x,t)\equiv\pi(\x,t)\ (\mod k)$
    for all~$\x$ and~$t \geq 0$.
    \label{thm:parityforcing}
\end{thm}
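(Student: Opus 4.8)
The plan is to build the initial configuration incrementally in time: we want to show that for every $T$, there is an even configuration $f_T(\cdot,0)$ achieving the prescribed residues $\pi(\x,t)$ for all $t \le T$, and then take a limit. For a fixed finite horizon $T$, observe that adding a multiple of $k$ chips to a vertex at time $0$ never changes any residue $f(\x,t) \bmod k$ at any later time (since $k$ chips on a vertex split into one chip per neighbor, deterministically, regardless of rotor positions — the rotor advances a full cycle — so adding $k$ chips somewhere is ``invisible'' mod $k$ to the whole future evolution). More generally, the key linearity-style fact is that adding $k$ chips to vertex $\x$ at time $0$ does nothing mod $k$ anywhere, ever; hence we have a lot of freedom to add ``large blocks'' of $k\cdot(\text{stuff})$ chips without disturbing residues.

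The main engine is a \emph{single-step correction}: suppose the current configuration already matches $\pi(\cdot,t)$ for all times $0,1,\dots,t-1$, and matches $\pi(\cdot,t)$ on all vertices \emph{except} we are allowed to have the wrong residue far away. We want to fix the residue at a given vertex $\x$ at time $t$ without disturbing times $<t$ or other already-fixed vertices at time $t$. The tool: push a controlled number of extra chips \emph{in from the leaves toward} $\x$. Concretely, I would show that by placing an appropriate number of chips (a multiple of a suitable power of $k$, to keep earlier times unaffected mod $k$) on a sphere of radius $R$ around the origin, after $R - |\x|$ \ldots\ actually more carefully: because the tree is infinite and $k \ge 3$, one can route chips so that they arrive at $\x$ exactly at time $t$ and, crucially, the ``spillover'' lands on vertices that we have not yet committed to — we work outward, fixing vertices in order of increasing distance from the origin and increasing time, so corrections always use fresh territory farther out. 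Since $\pi(\x,t)=0$ unless $\x \sim t$, the parity constraint is automatically consistent with chips moving one step per round from an even starting set.

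Concretely I would set up an induction on a well-ordering of pairs $(t,\x)$ — say lexicographic in $(t, |\x|)$ — and maintain the invariant that after processing $(t,\x)$, all residues at times $\le t$ on the ball of the appropriate radius are correct, while the initial chips placed so far are supported on a finite set. At step $(t,\x)$ we add $c \cdot k^{t}$ chips (for the unique $c \in \{0,\dots,k-1\}$ needed) at a vertex $\y$ with $|\y| = |\x| + t$ on the geodesic ray through $\x$, chosen in a previously-untouched branch; these chips march straight in and, because each intermediate vertex receives a multiple of $k^{s}$ chips at time $s$, the move mod $k$ is a pure translation by one step with no residue side effects at times $< t$, and at time $t$ they deposit exactly $c\cdot 1$ extra chips at $\x$ mod $k$ and possibly garbage at vertices strictly farther out, which will be cleaned up later in the induction. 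Running this over all of $V \times \{0,\dots,T\}$ gives $f_T(\cdot,0)$; passing $T \to \infty$ (the supports are nested appropriately, or one diagonalizes) yields the desired $f(\cdot,0)$. The main obstacle is making the ``no side effects at earlier times, controlled side effects at time $t$'' bookkeeping precise — i.e.\ proving rigorously that injecting $c\,k^{t}$ chips deep in a fresh branch really does propagate as a clean mod-$k$ delta function reaching $\x$ at time $t$ — and ensuring the branches used for successive corrections stay disjoint so the whole construction stays consistent; the infinite degree freedom ($k\ge 3$, in fact $k \ge 2$ suffices here) is exactly what makes enough fresh branches available.
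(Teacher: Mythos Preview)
Your approach is essentially the paper's: a double induction (outer on the time level $t$, inner on the distance $|\x|$), correcting the residue at $(\x,t)$ by adding $c\,k^{t}$ chips at a vertex $\y$ on the outward ray with $|\y|=|\x|+t$, so the extra pile splits evenly for $t$ steps, delivers exactly $c$ to $\x$ at time $t$, and perturbs only vertices with $|\z|>|\x|$; then one takes limits in $\theta$ and in $T$, which stabilize because chips are only ever added at vertices with $|\y|\ge t$. One slip to fix in your write-up: the warm-up claim that ``adding $k$ chips is invisible mod $k$ forever'' is false (one chip reaches each neighbor at time $1$, which is certainly visible mod $k$); what is true, and what you actually use, is that adding a multiple of $k^{t}$ is invisible mod $k$ through time $t-1$ --- and the ``fresh branches'' worry is unnecessary, since injectivity of $\x\mapsto\y(\x)$ already follows from placing $\y$ on the outward ray through $\x$.
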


\newcommand{\fine}{fine\xspace}
\begin{proof}
    Fix arbitrary initial directions for the rotors. 
    Let us call $f$, or more precisely, an initial configuration $(f(\x,0))_{\x \in V}$
    \emph{fine} for a subset $S \subseteq V \times \N_0$,
    \begin{itemize}
        \setlength{\itemsep}{0pt}
        \setlength{\parskip}{0pt}        
        \item if it yields $f(\x,t) \equiv \pi(\x,t)\ (\mod k)$ for all $(\x,t) \in S$, and
        \item if it is even, that is, for all $\x \in V$ we have $f(\x,0) =0$ if $\x \not\sim 0$. 
    \end{itemize}
    Consider first the initial configuration defined by  $f^{(0)}(\x,0):=\pi(\x,0)$ for all $\x \in V$.
    Clearly, $f^{(0)}$ is \fine for $V \times \{0\}$. 
    
    Assume now that there is a $T \ge 0$ and an initial configuration $f^{(T)}$ which is \fine for $V \times \{0, 
    \ldots, T\}$. In order to obtain an $f^{(T+1)}$ that is \fine for \mbox{$V \times \{0, \ldots, T+1\}$}, we shall modify $f^{(T)}$ by defining
    $f^{(T+1)}(\x,0):=f^{(T)}(\x,0)+\ee(\x) \,k^{T+1}$ for appropriately chosen
    $\ee(\x)\in\{0,1,\ldots,(k-1)\}$ with $\ee(\x)=0$ if $\x\not\sim0$.

    Observe that a pile of $k^{T+1}$ chips splits evenly~$T+1$ times. 
    Therefore, for all choices of the $\ee(\x)$, 
    $f^{(T+1)}(\x,t) \equiv f^{(T)}(\x,t)\ (\mod k)$ for all $(\x,t)$ with $t\leq T$.    
    This implies that
    $f^{(T+1)}$ is \fine for $V \times \{0, \ldots, T\}$. 
    
    We shall define $\ee$ inductively such that 
    $f^{(T+1)}$ is also \fine for $V \times \{T+1\}$.
    Let $\ee^{\langle0\rangle}(\x):=0$ for all $\x \in V$.
    Assume that for some $\theta \ge0$,
    $\ee^{\langle\theta\rangle}$ is such that the initial configuration
    $f^{\langle\theta\rangle}(\x,0):=f^{(T)}(\x,0) +\ee^{\langle\theta\rangle}(\x)\,k^{T+1}$ is \fine
    for all $(\x,T+1)$ with $|\x| < \theta$.
    Trivially, this is fulfilled for $\theta=0$.
    For each $\x$ with $|\x|=\theta$,
    we choose some $\y(\x)$ with
    $|\y(\x)|=T+1+\theta$ such that the distance 
    between $\x$ and $\y(\x)$ is $T+1$. Note that this implies that the mapping $\x \mapsto \y(\x)$ is injective.
    We define $\ee^{\langle\theta+1\rangle}$ as follows.
    For all $\x$ with $|\x|=\theta$, 
    let $\ee^{\langle\theta+1\rangle}(\y(\x)):=\big(\pi(\x,T+1)-f^{\langle\theta\rangle}(\x,T+1)\big) \mod k$.    
    For all $\z\in V\setminus \{\y(\x)\colon |\x|=\theta\}$,
    we set $\ee^{\langle\theta+1\rangle}(\z):=\ee^{\langle\theta\rangle}(\z)$.    
    We first argue that $f^{\langle\theta+1\rangle}$ is an even initial configuration.
    As $f^{\langle\theta\rangle}$ is an even initial configuration,
    it suffices to show that for all $\x$ with $|\x|=\theta$ and $\y(\x) \not \sim 0$, 
    $\ee^{\langle\theta+1\rangle}(\y(\x)) = 0$.
    However, if $|\y(\x)|$ is  odd, then $|\x|$ and $T+1$ have different parity.
    Hence $\pi(\x,T+1) = 0$ and $f^{\langle\theta\rangle}(\x,T+1)=0$,
    and by construction, $\ee^{\langle\theta+1\rangle}(\y(\x)) = 0$.
    Moreover, for all $\x$ with $|\x|=\theta$,
    \begin{align*}
        f^{\langle\theta+1\rangle}(\x,T+1)
        &= f^{\langle\theta\rangle}(\x,T+1) +\ee^{\langle\theta+1\rangle}(\y(\x))
        \equiv \pi(\x,T+1)\ (\mod k)
        \intertext{and for all $\x$ with $|\x|<\theta$,}
        f^{\langle\theta+1\rangle}(\x,T+1)
        &=
        f^{\langle\theta\rangle}(\x,T+1)
        \equiv \pi(\x,T+1)\ (\mod k).
    \end{align*}
    Therefore, 
    $f^{\langle\theta+1\rangle}$ is \fine for all $(\x,T+1)$ with $|\x| < \theta+1$.     
    
    For all vertices $\x$ and all $\theta\geq\max(0,|\x| - T)$,
    $\ee^{\langle\theta\rangle}(\x)=
    \ee^{\langle\theta+1\rangle}(\x)$.
    Hence $\ee(\x) := \lim_{\theta \to \infty} \ee^{\langle\theta\rangle}(\x)$ and
    $f^{(T+1)}(\x,0):=f^{(T)}(\x,0)+\ee(\x) \,k^{T+1}$
    are well-defined
    for all~$\x\in V$. Also, $f^{(T+1)}$ is \fine for $V \times \{0, \ldots, T+1\}$.
    
    We now note that
    for all vertices $\x \in V$ and
    for all $T\ge |\x|$, we have that $f^{(T)}(\x,0) = f^{(T+1)}(\x,0)$.
    Hence again, $f(\x,0) := \lim_{T \to \infty} f^{(T)}(\x,0)$ is well-defined
    for all~$\x\in V$.
    Therefore, $f$ is \fine for $V \times \N_0$, which finishes the proof.
\end{proof}


\section{The Basic Method}
\label{sec:basic}

In this section, we lay the foundations for our analysis of the maximal possible
single-vertex discrepancy. Closely following the arguments of~\cite{CooperSpencer}, we will see that it is possible to determine the
contribution of a vertex to the discrepancy at another one independent from
all other vertices.

For the moment, in addition to the notations given in \secref{preliminaries},
we also use the following mixed notation.
By $E(\x,t_1,t_2)$ we denote
the (possibly fractional) number of chips at location $x$
after first performing $t_1$ steps with the Propp machine and
then $t_2-t_1$ steps with the linear machine.

We are interested in bounding the discrepancies
$|f(\x,t)-E(\x,t)|$ for all vertices~$\x$ and all times~$t$.
It suffices to consider the vertex $\x=\0$.
From
\begin{eqnarray*}
E(\0,0,t) &=& E(\0,t),\\
E(\0,t,t) &=& f(\0,t),
\end{eqnarray*}
we obtain
\begin{equation*}
    f(\0,t)-E(\0,t) \ =\ \sum_{s=0}^{t-1} \left( E(\0,s+1,t) - E(\0,s,t) \right).
\end{equation*}

Let $|\x|$ denote the distance of a vertex $\x$ to $\0$.
Now $E(\0,s+1,t) - E(\0,s,t) = \sum_{\x \in V} \sum_{\ell = 1}^{f(\x,s)}
     \big(H(|\x+\NEXT^{\ell-1}(\ARR(\x,s))|, t - s - 1) - H(|\x|,t-s)\big)$
motivates the definition of the \emph{influence} of a Propp move (compared to a
random walk move) from vertex $\x$ in direction $A\in\{-1,+1\}$ on the discrepancy of $\0$
($t$ time steps later) by
\[
    \INF(x,A,t) := H(x+A, t-1) - H(x,t).
\]

In order to ultimately reduce all $\ARR$s involved to the initial arrow settings $\ARR(\cdot,0)$, we  define
$s_i(\x) := \min \big\{ u\geq 0 \mid i < \sum_{t=0}^{u} f(\x,t) \big\}$
for all $i\in\N_0$. Hence
at time $s_i(\x)$ the location~$\x$ is
occupied by its $i$-th chip (where, to be consistent with~\cite{1DPropp}, we start counting with the $0$-th chip).

Consider the discrepancy at $\0$ at time~$T$. Then the above yields

\begin{equation}
    f(\0,T) - E(\0,T) = \sum_{\x\in V} \!\sum_{\substack{i\geq 0,\\ s_i(\x) < T}}\! \INF(|\x|,\NEXT^i(\ARR(\x,0)),T-s_i(\x)).
\label{eq:basic1}
\end{equation}

Since the inner sum of \eq{basic1} will occur frequently in the remainder, let us define the \emph{contribution} of a vertex~$\x$ to be
    \begin{equation*}
        \CON(\x) \ := \sum_{\substack{i\geq 0,\\ s_i(\x) < T}} \INF(|\x|,\NEXT^i(\ARR(\x,0)),T-s_i(\x)),
    \end{equation*}
where we both suppress the initial configuration leading to the $s_i(\cdot)$ as well as the run-time~$T$.

We summarize the discussion so far in the following theorem. It shows that it suffices to examine each vertex~$\x$ separately.
\begin{thm}
    \label{thm:main}
    The discrepancy between Propp machine
    and linear machine after~$T$ time steps is the sum of the contributions $\CON(\x)$
    of all vertices~$\x$,
    \ie,
    \begin{equation*}
        f(\0,T) - E(\0,T) \ =\ \sum_{\x\in V} \CON(\x).
    \end{equation*}
\end{thm}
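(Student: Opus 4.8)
The plan is essentially to make rigorous the telescoping computation sketched in the surrounding text. Start from the identity
\[
  f(\0,T)-E(\0,T) \;=\; \sum_{s=0}^{T-1}\bigl(E(\0,s+1,T)-E(\0,s,T)\bigr),
\]
which follows from $E(\0,0,T)=E(\0,T)$, $E(\0,T,T)=f(\0,T)$, and the fact that the sum telescopes. The key step is to evaluate a single term $E(\0,s+1,T)-E(\0,s,T)$. Here we compare two processes that agree for the first $s$ Propp steps and differ only in whether step $s+1$ is a Propp step or a linear step. Since the linear machine is linear in the chip configuration and the harmonic property gives $E(\x',t')=\sum_{\x}f(\x,t)\,H(|\x|,t'-t)$ for a configuration $f(\cdot,t)$ evolved linearly, one can expand both $E(\0,s+1,T)$ and $E(\0,s,T)$ as sums over vertices $\x$ and over the individual chips $\ell=1,\dots,f(\x,s)$ sitting on $\x$ at time $s$. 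The $\ell$-th such chip, under one Propp step, moves deterministically to $\x+\NEXT^{\ell-1}(\ARR(\x,s))$, contributing $H(|\x+\NEXT^{\ell-1}(\ARR(\x,s))|,T-s-1)$ to the count at $\0$ at time $T$; under one linear step it contributes $H(|\x|,T-s)$. Subtracting gives exactly
\[
  E(\0,s+1,T)-E(\0,s,T)\;=\;\sum_{\x\in V}\sum_{\ell=1}^{f(\x,s)}\INF\!\bigl(|\x|,\NEXT^{\ell-1}(\ARR(\x,s)),T-s-1\bigr),
\]
recalling $\INF(x,A,t)=H(x+A,t-1)-H(x,t)$.

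Next I would re-index the double sum $\sum_{s=0}^{T-1}\sum_{\x}\sum_{\ell}$ so that it is organized by vertex and by global chip index rather than by time slice. For a fixed vertex $\x$, as $s$ ranges over $0,\dots,T-1$ and $\ell$ over $1,\dots,f(\x,s)$, the pair (time, local chip index) traverses exactly the chip-visits to $\x$ up to time $T$; using the definition $s_i(\x)=\min\{u\ge0:\,i<\sum_{t=0}^{u}f(\x,t)\}$, the $i$-th visit occurs at time $s_i(\x)$, and it is present in the sum precisely when $s_i(\x)<T$. The crucial bookkeeping point, already needed implicitly in the excerpt, is that the arrow $\ARR(\x,s_i(\x))$ at the moment of the $i$-th visit — together with the local rotation within that round — equals $\NEXT^{i}(\ARR(\x,0))$ applied to the initial arrow; this is just the statement that the rotor of $\x$ has advanced a total of $i$ steps by the time its $i$-th chip departs. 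Substituting this reduction of every $\ARR(\x,s_i(\x))$ to $\ARR(\x,0)$ turns the term for $\x$ into $\sum_{i\ge0,\,s_i(\x)<T}\INF(|\x|,\NEXT^{i}(\ARR(\x,0)),T-s_i(\x))=\CON(\x)$, which is \eq{basic1}, and summing over $\x$ gives the theorem.

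The main obstacle is the careful justification of the rotor-advancement claim and the re-indexing when several chips occupy $\x$ in the same round. One must check that, although chips within a round move "one after the other in arbitrary order," the net effect on the rotor after a round with $f(\x,s)$ departures is a rotation by $f(\x,s)$, and that the displacement of the $\ell$-th departing chip is governed by $\NEXT^{\ell-1}$ of the arrow as it stood at the start of the round — so that the cumulative count over rounds matches $\NEXT^{i}(\ARR(\x,0))$ for the globally $i$-th chip. One also has to confirm convergence/finiteness of the (a priori infinite over $\x\in V$) sum: only vertices with $f(\x,s)>0$ for some $s<T$ contribute, and on the $k$-regular tree the total number of chips stays finite under both machines when the initial configuration has finite support (or, more generally, one argues termwise using that $H(x,t)=0$ for $x>t$, so each $\CON(\x)$ has only finitely many nonzero summands and only finitely many $\x$ are reachable). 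Once these points are settled, the result is exactly the displayed telescoping identity, so the "theorem" is really a summary of the preceding derivation rather than a new argument.
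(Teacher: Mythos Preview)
Your proposal is correct and follows exactly the paper's approach: the theorem is explicitly presented there as a summary of the preceding telescoping derivation, which you reproduce (with some extra care on the re-indexing and rotor-advancement bookkeeping). One small slip: with $\INF(x,A,t)=H(x+A,t-1)-H(x,t)$, the difference $H(|\x+\A|,T-s-1)-H(|\x|,T-s)$ is $\INF(|\x|,\A,T-s)$, not $\INF(|\x|,\A,T-s-1)$.
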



\section{Divergence of the models}
\label{sec:div}

In this section, we analyze a specific initial configuration and show
that the Propp machine may deviate from the linear machine
by an arbitrarily large number of chips.

We choose the initial directions of the rotors to point inwards in the direction of the origin.
For a fixed time~$T$ at which we aim to maximize the discrepancy
$f(\0,T) - E(\0,T)$ we examine a configuration in which 
all vertices $\x$ with $0 < |\x|\leq T/\lambda$ and $\lambda:=\tfrac{k}{k-2}$
are occupied by a number of chips not divisible by $k$ only once.
We assume that at time $T-t_{|\x|}$ with
$t_{|\x|}:=\llceil\lambda |\x| \rrceil$\footnote{With $\llceil\lambda |\x| \rrceil$
we denote the smallest integer $t$ with $t\geq\lambda |\x|$ and $\x\sim T-t$.}
an ``odd chip'' is sent in the direction of the origin.
That is, the number of chips at $\x$ at time $t_{|\x|}$ is
$f(\x,t_{|\x|})\equiv1$ $(\mod k)$ while 
$f(\x,t)\equiv0$ $(\mod k)$ for all times $t\neq t_{|\x|}$.
For the given initial direction of the rotors
such a configuration exists by \thmref{parityforcing}.
We will prove the following theorem.

\begin{thm}
    \label{thm:div}
    For any initial direction of the rotors and any $T>0$,
    there is an even initial configuration such that the
    single vertex discrepancy between the Propp machine and linear machine
    after $T$ time steps is $\Omega(\sqrt{k\,T})$.
\end{thm}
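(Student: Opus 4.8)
The plan is to use \thmref{main} to reduce the discrepancy to a sum of per-vertex contributions, and then to estimate $\CON(\x)$ for the specific configuration described: the rotors point inwards, and each vertex $\x$ with $0 < |\x| \le T/\lambda$ emits exactly one ``odd chip'' toward the origin at time $T - t_{|\x|}$, where $t_{|\x|} = \llceil \lambda |\x| \rrceil$ and $\lambda = k/(k-2)$. By \thmref{parityforcing}, such a configuration exists. The point of the timing $t_{|\x|} \approx \lambda|\x|$ is that $\lambda|\x|$ is (up to lower-order terms) the time at which the random walk from distance $|\x|$ is most likely to be at the origin; so each odd chip is ``cashed in'' at the moment its influence $\INF(|\x|, \text{inward}, T - s) = H(|\x|-1, T-s-1) - H(|\x|, T-s)$ is close to maximal. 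For a single odd chip sent inward at time $s$, the contribution of $\x$ is exactly this single $\INF$ term (all other chips at $\x$ come in multiples of $k$ and, being split evenly by the linear machine at a vertex, contribute $0$ — this is the content of the mod-$k$ reduction underlying \thmref{main}).

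The first key step is therefore to get a good lower bound on $H(x-1, t-1) - H(x, t)$ when $t \approx \lambda x$. Writing $H(x,t) = k^{-t} n(x,t)$ and using the recursions for $n(x,t)$ in \eqref{eq:n}, I expect a local central limit type estimate: for $x$ of order $\sqrt{t/\lambda}$ and $t \sim \lambda x$, one has $H(x,t) = \Theta(1/\sqrt{t})$ with a Gaussian-like profile in $x$, and the discrete derivative in the relevant direction is of order $H(x,t)$ itself (i.e. also $\Theta(1/\sqrt{t})$) rather than one order smaller, precisely because we are not at the symmetric peak but at a point where the profile has nonnegligible slope. Concretely I would show $\INF(x, \text{inward}, t) = c/\sqrt{t} + (\text{smaller})$ for a positive constant $c = c(k)$, uniformly for $x$ in a constant fraction of the range $[1, T/\lambda]$ and with $t = T - t_x$. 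The cleanest route is probably to derive an explicit or asymptotic formula for $n(x,t)$ on the $k$-regular tree (these are classical — related to Chebyshev-like polynomials / the spectral measure of the tree, giving $H(x,t)$ in terms of $(2\sqrt{k-1}/k)^t$ times a polynomial factor and a slowly varying envelope), and then extract the leading term of the difference.

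The second key step is to sum these contributions over all $\x$ with $0 < |\x| \le T/\lambda$. There are $(k-1)^{r-1}k$ vertices at distance $r$ (or simply: the count grows, but we only need a lower bound), each contributing the same $\INF(r, \text{inward}, T - t_r) \ge c'/\sqrt{T}$ once we restrict to $r$ in, say, $[\epsilon T, \epsilon' T]$ for suitable constants. Even using only the $k$ vertices at a single well-chosen distance $r^* = \Theta(T)$, the total is $\ge k \cdot c'/\sqrt{T}$; but to reach $\Omega(\sqrt{kT})$ I instead sum over all distances $r$ in a constant-fraction window, of which there are $\Theta(T)$ values, giving $\Theta(T) \cdot \Theta(1/\sqrt{T}) = \Theta(\sqrt{T})$ — and tracking the $k$-dependence of the constant $c(k)$ through the local limit estimate upgrades this to $\Omega(\sqrt{kT})$. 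I must also check that the contributions do not cancel: with all rotors pointing inward and all odd chips sent inward, every $\INF$ term in the sum has the same sign, so there is no cancellation and the bound on the sum follows from the bound on each term.

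The main obstacle I anticipate is the local central limit estimate for $n(x,t)$ on the $k$-regular tree with explicit control of the constant's dependence on $k$, and in particular verifying that at time $t \approx \lambda x$ the discrete gradient $H(x-1,t-1) - H(x,t)$ really is of the same order as $H(x,t)$ (positive and $\Theta(1/\sqrt{t})$) rather than asymptotically degenerate — this is where the specific choice $\lambda = k/(k-2)$ does its work, being the ratio at which the drift of the walk conditioned to return balances the tree's exponential growth. Secondarily, some care is needed with the floor/parity adjustment in $t_{|\x|} = \llceil \lambda|\x| \rrceil$ and with the edge effects near $x = 1$ and near $x = T/\lambda$, but these only affect lower-order terms and a constant fraction of the summands can be kept safely in the ``good'' range, which suffices for the $\Omega$ bound.
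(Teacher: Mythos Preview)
Your overall strategy --- use \thmref{parityforcing} to realize the configuration, reduce via \thmref{main} to a sum of one inward $\INF$-term per vertex, then sum over shells --- matches the paper. The gap is in the size of the per-vertex influence. You assert $\INF(x,\text{inward},t)=c/\sqrt{t}+(\text{smaller})$ at $t\approx\lambda x$, but on a $k$-regular tree with $k\ge 3$ the return probability $H(x,t)$ decays exponentially: at the optimal time one has $\INF(x,-1,t_x)=\Theta\!\big((k-1)^{-x}/\sqrt{x}\big)$, not $\Theta(1/\sqrt{t})$. (You in fact half-know this: you mention the factor $(2\sqrt{k-1}/k)^t$ from the tree spectrum, which is incompatible with a $1/\sqrt{t}$ answer.) Consequently your summation step is inconsistent: if the per-vertex $\INF$ were really $\ge c'/\sqrt{T}$, multiplying by the $k(k-1)^{x-1}$ vertices at distance $x$ would give an exponentially large sum, not $\Theta(\sqrt{T})$, and your fallback of ``using only the $k$ vertices at a single distance'' throws away the entire mechanism that produces the bound.

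The correct bookkeeping is that the exponential decay $(k-1)^{-x}$ of the per-vertex influence exactly cancels the exponential growth $k(k-1)^{x-1}$ of the shell, leaving a per-shell contribution of order $1/\sqrt{x}$; summing $\sum_{x\le T/\lambda} x^{-1/2}$ then yields $\Theta(\sqrt{T})$, with the $k$-dependent constant giving $\Omega(\sqrt{kT})$. The paper carries this out not via a local CLT but by an exact combinatorial identity: setting $i(x,t):=k\,n(x-1,t-1)-n(x,t)=k^t\,\INF(x,-1,t)$, one checks the recursion $i(x,t)=i(x-1,t-1)+(k-1)\,i(x+1,t-1)$ with a single nonzero boundary value, recognizes the resulting lattice-path count, and obtains the closed form $i(x,t)=(k-1)^{(t-x)/2+1}\tfrac{x}{t}\binom{t}{(t+x)/2}$ via the Ballot problem. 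Stirling then gives the precise $\Theta((k-1)^{-x}/\sqrt{x})$ estimate at $t=t_x$. Your spectral/CLT route can be made to work, but you must aim for this exponentially-small target and then let the shell multiplicity do the cancellation.
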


By \thmref{main}, the discrepancy at the origin at time~$T$
of the above described initial configuration is
\begin{align}
	f(\0,T) - E(\0,T)
	    &= \sum_{\substack{\x\in V,\\ |\x|\leq T/\lambda}} \big( H(|\x|-1,t_{|\x|}-1)-H(|\x|,t_{|\x|}) \big)\notag\\
		&= \sum_{x=1}^{\lfloor T/\lambda \rfloor} k (k-1)^{x-1} \left( H(x-1,t_x-1)-H(x,t_x) \right)\notag\\
		&= \sum_{x=1}^{\lfloor T/\lambda \rfloor} k (k-1)^{x-1} \left( \frac{n(x-1,t_x-1)}{k^{(t_x-1)}} - \frac{n(x,t_x)}{k^{t_x}} \right)\notag\\
		&= \sum_{x=1}^{\lfloor T/\lambda \rfloor} \frac{(k-1)^{x-1}}{k^{t_x-1}} \big( k\,n(x-1,t_x-1) - n(x,t_x) \big)\notag\\
		&= \sum_{x=1}^{\lfloor T/\lambda \rfloor} \frac{(k-1)^{x-1}}{k^{t_x-1}} \ i(x,t_x) \label{eq:si}
\end{align}
with
\[
	i(x,t) := k\,n(x-1,t-1) - n(x,t)
\]
for all $x,t\geq1$.  Let us define $i(x,t)=0$ otherwise.
For $x,t\geq2$ we get
\begin{align*}
	i(x,t) &= k\,n(x-1,t-1) - n(x,t) \\
           &= k n(x-2,t-2) + k (k-1) n(x,t-2) \\
           & \qquad - n(x-1,t-1) - (k-1) n(x+1,t-1) \\
           &= i(x-1,t-1) + (k-1) i(x+1,t-1).
\end{align*}

\noindent
It remains to examine the cases $(x,t)\in(\N\times\N)\setminus(\N_{\geq2}\times\N_{\geq2})$.
Right from the definition, we get $i(1,1)=k-1$.
For $x\geq 2$ and $t=1$ we have
\begin{align*}
    i(x,1) &= k\,n(x-1,0) - n(x,1) \\
           &= 0 \\
           &= i(x-1,0) + (k-1) i(x+1,0).
\end{align*}

\noindent
Also for $x=1$ and $t\geq 2$ we have
\begin{align*}
	i(1,t) &= k\,n(0,t-1) - n(1,t) \\
		   &= (k-1)\,\big(n(0,t-1) - n(2,t-1)\big) \\
		   &= (k-1)\,\big(k\,n(1,t-2) - n(2,t-1)\big) \\
		   &= i(0,t-1) + (k-1)\,i(2,t-1).
\end{align*}

Summarizing the above, we see that $i(x,t)$ can be defined recursively as follows.
\begin{align*}
    i(x,0) &= 0 &\text{for all $x\geq0$,}\\
    i(0,t) &= 0 &\text{for all $t\geq0$,}\\
    i(1,1) &= k-1,\\
    i(x,t) &= i(x-1,t-1) + (k-1)\,i(x+1,t-1) &\text{for $(x,t)\in\N_{\geq 1}^2 \setminus\{(1,1)\}$.}
\end{align*}

This recursive view of $i(x,t)$ reveals another interpretation of these quantities.
Apart from a factor $(k-1)^{(\frac{t-x}{2}+1)}$,
$i(x,t)$ counts the number of lattice paths from the origin $(0,0)$ to $(x,t)$ of steps $(+1,+1)$ and
$(-1,+1)$ which do not cross the line $x=0$.  This can be described by the well-known
\emph{Ballot numbers}.  The classical description is as follows.
Suppose A and B are candidates for president.
Let A receive a total number of$a$~votes and B one of $b$~votes. Let $a\geq b$. Now consider the progress of counting the votes, one after the other. Then the probability that throughout the counting B never has more votes than A is $(a-b+1)/(a+1)$~\citep{Ballot}.
This implies that for given positive integers $a,b$ with $a>b$, the number of lattice paths starting
at the origin and consisting of $a$ upsteps $(+1,+1)$ and $b$ downsteps $(+1,-1)$
such that no step ends on the $x$-axis
is $\frac{a-b}{a+b} \binom{a+b}{a}$.
We are interested in the number of lattice paths starting at $(0,x)$ and
consisting of $(t-x)/2$ upsteps $(+1,+1)$ and $(t+x)/2$ downsteps $(+1,-1)$
such that no step ends below the $x$-axis.
Therefore,
\begin{equation}
  \label{eq:i}
	i(x,t) = (k-1)^{(\frac{t-x}{2}+1)} \,\frac{x}{t} \binom{t}{\frac{t+x}{2}}
\end{equation}
for $x,t>0$.  Note that this can be continued to all $(x,t)\in(\R_+)^2$.

Here is a more intuitive interpretation why the Ballot numbers come into play. 
We want the effect of having one chip 
at distance $x$ and time $t:= t_x$ make its first move toward the origin. 
So we can reduce to {\em comparing} two chips, one at distance $x-1$ 
and the other at distance $x+1$ at time $t-1$.  

We consider the two chips on a common random data set.  The data set 
is a string $s$ of length $t-1$ from $\{I,O\}$ with probabilities 
$\frac{1}{k},\frac{k-1}{k}$ for $I$ (in, toward the origin), $O$ 
(out, away from the origin) respectively.  The string $s$ determines 
where the chip goes (in terms of how far away it is from the origin, 
which is what concerns us): with $O$ the distance $x$ goes up by one 
and with $I$ it goes down by one {\em unless} the current distance 
is zero in which case it goes up by one. 

Say a string $s$ has the property $\MINUS$ if for some initial 
segment the $I$'s exceed the $O$'s by precisely $x$.  In a ``fictitious 
walk'', allowing distance $x$ to go negative, the chip starting at 
$x$ would reach $-1$.  But consider just before this happens for 
the first time.  The two chips are at $0,2$ respectively and an $I$ 
is received.  Now the two chips are both at $1$.  Hence whatever comes 
after, they end up at the same point.  Thus the two chips have the 
same probability of $\MINUS$ and ending at zero. 
    The times that the near chip ends at zero but the far chip does 
not are then precisely the strings $s$ which do not have $\MINUS$ 
and for which the $I$'s exceed the $O$'s by precisely $x-1$.  And as 
the far chip is never closer than the near chip, it never happens 
that the far chip ends at zero but the near chip does not. 
    Thus the difference in the probabilities is given by the Ballot 
problem as calculated above.

\Eqs{si}{i} now give
\begin{align}
	f(\0,T) - E(\0,T) 
		&=\ \sum_{x=1}^{\lfloor T/\lambda \rfloor}
		    \frac{(k-1)^{x-1}}{k^{(t_x-1)}} \
		    (k-1)^{(\frac{t_x-x}{2}+1)} \,
		    \frac{x}{t_x}
		    \binom{t_x}{\frac{t_x+x}{2}}\notag\\
		&=\ \sum_{x=1}^{\lfloor T/\lambda \rfloor}
		    \frac{(k-1)^{(\frac{t_x+x}{2})}}{k^{(t_x-1)}} \
		    \frac{x}{t_x}
		    \binom{t_x}{\frac{t_x+x}{2}}\notag\\
		&>\ \sum_{x=1}^{\lfloor T/\lambda \rfloor}
		    \frac{(k-1)^{\frac{\lambda+1}{2} x}}{2\lambda \, k^{\lambda x}} \
		    \binom{\llceil\lambda x \rrceil}{\frac{\llceil\lambda x \rrceil+x}{2}}.\label{eq:sii}
\end{align}

\noindent
It remains to bound the binomial coefficient.
From the Stirling formula we know that
\begin{equation}
    \frac{5}{2}\,\sqrt{n} \left(\frac{n}{e}\right)^n \,
    <\, n! \,
    <\,\sqrt{\frac{15}{2}}\,\sqrt{n} \left(\frac{n}{e}\right)^n
    \label{eq:stirling}
\end{equation}
for $n>0$ and get
\begin{equation}
    \label{eq:binom}
    \frac{n^{n+1/2}}{3\,(n-k)^{n-k+1/2}\,k^{k+1/2}}
    \,<\, \binom{n}{k}
    \,<\,\frac{n^{n+1/2}}{2\,(n-k)^{n-k+1/2}\,k^{k+1/2}}.
\end{equation}
for $n,k>0$. Note that \eq{binom} also holds for the generalized binomial coefficient
which is defined for nonintegral $n,k$
by using the Gamma function~$\Gamma$.
The following lemma shows that \ineq{sii} also holds without the ceiling function
in the binomial coefficient.

\begin{lem}
    $\dbinom{2x}{x+y}$ is monotonic nondecreasing in $x$ for $0\leq y\leq x$.
    \label{lem:bin}
\end{lem}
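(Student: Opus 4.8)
The plan is to pass to the generalized binomial coefficient
\[
  \binom{2x}{x+y} \;=\; \frac{\Gamma(2x+1)}{\Gamma(x+y+1)\,\Gamma(x-y+1)},
\]
with $y$ held fixed and $x$ varying, and to show that its logarithmic derivative in $x$ is nonnegative on the relevant domain. First I would observe that when $0\le y\le x$ all three arguments $2x+1$, $x+y+1$, $x-y+1$ are at least $1$, so the right-hand side is a well-defined positive smooth function of $x$ there; hence it suffices to prove $\frac{d}{dx}\log\binom{2x}{x+y}\ge 0$.

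Writing $\psi=\Gamma'/\Gamma$ for the digamma function, differentiation gives
\[
  \frac{d}{dx}\log\binom{2x}{x+y} \;=\; 2\psi(2x+1)-\psi(x+y+1)-\psi(x-y+1).
\]
I would then use two elementary facts about $\psi$ on $(0,\infty)$, both immediate from the series $\psi'(z)=\sum_{n\ge 0}(z+n)^{-2}$ and $\psi''(z)=-2\sum_{n\ge 0}(z+n)^{-3}$: namely that $\psi$ is strictly increasing and strictly concave. Applying concavity to the points $x+y+1$ and $x-y+1$, whose midpoint is $x+1$, yields $\psi(x+y+1)+\psi(x-y+1)\le 2\psi(x+1)$; monotonicity together with $2x+1\ge x+1$ (valid since $x\ge 0$) yields $2\psi(x+1)\le 2\psi(2x+1)$. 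Chaining these two inequalities shows the displayed derivative is $\ge 0$, which is exactly the claim.

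I do not expect a serious obstacle; the only points needing care are bookkeeping. One must confirm that the variable held fixed is $y$ and the one increasing is $x$, so that the lemma applies as intended in \ineq{sii}: there $\binom{\llceil\lambda x\rrceil}{(\llceil\lambda x\rrceil+x)/2}$ is compared with $\binom{\lambda x}{(\lambda x+x)/2}$, and setting $2X$ equal to the top entry makes the lower parameter $X+x/2$, i.e.\ $Y=x/2$ is constant while $X$ grows from $\lambda x/2$ to $\llceil\lambda x\rrceil/2$; the hypothesis $Y\le X$ then holds because $\lambda>1$. I would also remark that for integer (or half-integer) arguments the same monotonicity follows by a direct ratio computation, but since the paper needs the statement for the generalized binomial coefficient, the digamma/concavity argument is the cleaner route and subsumes the discrete case. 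Finally, the inequality is in fact strict for $x>0$ whenever $y<x$, although only the non-strict version is required here.
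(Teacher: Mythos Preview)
Your argument is correct and essentially mirrors the paper's: both compute the logarithmic derivative and reduce to showing $2\psi(2x+1)\ge\psi(x+y+1)+\psi(x-y+1)$ via basic properties of the digamma function. The paper's version is in fact slightly simpler in that it omits the concavity step: since $0\le y\le x$ gives both $x+y+1\le 2x+1$ and $x-y+1\le 2x+1$, monotonicity of $\psi$ alone already yields $\psi(x+y+1)\le\psi(2x+1)$ and $\psi(x-y+1)\le\psi(2x+1)$, so your detour through the midpoint $x+1$ is not needed.
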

\begin{proof}
    With $\Psi$ denoting the logarithmic derivative of the gamma function, \ie,
    the Digamma function, we get
    \begin{align*}
        \frac {\partial \dbinom{2x}{x+y}} {\partial\,x} =
            \big( 2\,\Psi(2x+1) - \Psi(x+y+1) - \Psi(x-y+1) \big) \, \binom{2x}{x+y}.
    \end{align*}
    The lemma follows by the fact that above binomial coefficient is
    positive for $x,y\geq 0$ and that $\Psi(x)$ is monotonic nondecreasing
    for $x>0$ (\citep[Thm.~7]{Inequalities}).
\end{proof}

By the definitions of $\lambda$ and $t_x$ we now get with \lemref{bin} and \eq{binom}
\begin{align}
    \binom{\llceil\lambda x\rrceil}{(\llceil\lambda x\rrceil + x)/2} \geq
    \binom{\lambda x}{\tfrac{\lambda+1}{2} x}
    =\,
    \binom{\tfrac{k}{k-2}x}{\tfrac{1}{k-2}x}
    >\,
        \frac{k^{\frac{k}{k-2}x+\frac{1}{2}} \, (k-2)^\frac{1}{2}}
             {3\,(k-1)^{\frac{k-1}{k-2}x+\frac{1}{2}}\,\sqrt{x}}
\end{align}
for $x>0$, $k>2$.

Using this we obtain for all $k>2$
\begin{align*}
	f(\0,T) - E(\0,T) \,
		>\ \sum_{x=1}^{\lfloor T/\lambda \rfloor} \,\frac{(k-2)^\frac{3}{2}}
		                    {6 \, k^{\frac{1}{2}} \,(k-1)^{\frac{1}{2}}\,\sqrt{x}}
		=\,\Omega(\sqrt{k\,T}),
\end{align*}
which proves \thmref{div}.  Using the same arguments as the following two sections one can also prove
that for all even initial configurations the single vertex discrepancy after
$T$ time steps is at most $O(\sqrt{k T})$.


\section{Convergence of the models}
\label{sec:conv}

The previous section showed that for very special configurations the single vertex
discrepancy can be unbounded.  In this section we show that, on the other hand, many
configurations have a bounded discrepancy.  We will prove the following theorem.

\begin{thm}
    \label{thm:conv}
    If $f(\x,t)\equiv0$ $(\mod k)$ for all $\x$ and $t$ such that
    $(1-\ee) \lambda |\x| < T-t < (1+\ee) \lambda |\x|$
    with $\lambda:=\tfrac{k}{k-2}$, then the
    discrepancy between Propp machine and linear machine
    at time~$T$ and vertex $0$ is bounded by a constant depending only on $k$ and $\ee>0$.
\end{thm}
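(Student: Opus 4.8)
The plan is to build on \thmref{main}: it suffices to bound $\sum_{\x}|\CON(\x)|$. Fix $\x$ with $x:=|\x|$; the origin contributes nothing since $\INF(0,A,u)=H(1,u-1)-H(0,u)=0$, so assume $x\ge 1$. I will regroup the summands of $\CON(\x)$ according to the departure time: the chips leaving $\x$ during the round $s\to s+1$ are exactly those with consecutive indices $\sum_{t<s}f(\x,t),\dots,\sum_{t\le s}f(\x,t)-1$ in the ordering of \eq{basic1}, and they advance the rotor through $f(\x,s)$ consecutive entries of the cyclic rotor sequence. Writing $f(\x,s)=qk+\rho$ with $0\le\rho<k$, each of the $k$ generators occurs $q$ or $q+1$ times among these entries, exactly $\rho$ of them occurring $q+1$ times. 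Because $\sum_{A\in\DIR}\INF(x,A,u)=\sum_{A\in\DIR}H(|\x+A|,u-1)-k\,H(x,u)=0$ by the harmonic identity $k\,H(x,u)=\sum_{A\in\DIR}H(|\x+A|,u-1)$ noted in \secref{preliminaries}, the $q$ complete cycles contribute zero, so the net contribution $R(\x,s)$ of round $s$ is a sum of at most $\rho\le k-1$ influence terms. In particular $R(\x,s)=0$ whenever $f(\x,s)\equiv 0\pmod k$, and in all cases $|R(\x,s)|\le(k-1)\max_{A\in\DIR}|\INF(x,A,T-s)|$. Finally $\INF(x,-1,u)=k^{-u}i(x,u)\ge 0$ while $(k-1)\INF(x,+1,u)=-\INF(x,-1,u)$, so $\max_{A}|\INF(x,A,u)|=\INF(x,-1,u)$, which by \eqs{prob}{i} equals $k^{-u}(k-1)^{(u-x)/2+1}\,\tfrac{x}{u}\binom{u}{(u+x)/2}$ when $u\ge x$ and $u\equiv x\pmod 2$, and is $0$ otherwise.

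Now feed in the hypothesis of \thmref{conv}. If $R(\x,s)\ne 0$ then $f(\x,s)\not\equiv 0\pmod k$, forcing $T-s\le(1-\ee)\lambda x$ or $T-s\ge(1+\ee)\lambda x$; writing $u:=T-s$, both alternatives give $|x-u/\lambda|\ge\frac{\ee}{1+\ee}\cdot\frac{u}{\lambda}=:2c_\ee u$ with $c_\ee:=\frac{(k-2)\ee}{2k(1+\ee)}$, a constant depending only on $k$ and $\ee$. Summing over the $k(k-1)^{x-1}$ vertices at distance $x$ and collapsing the powers of $k-1$ (with $p:=1/k$, $q:=(k-1)/k$ one gets $(x-1)+(u-x)/2+1=(x+u)/2$ and $(k-1)^{(x+u)/2}k^{-u}=q^{(u+x)/2}p^{(u-x)/2}$), we obtain
\begin{equation*}
   |f(\0,T)-E(\0,T)|\ \le\ \sum_{x\ge1}\ \sum_{\substack{u\ge 1,\ u\equiv x\,(2)\\|x-u/\lambda|\ge 2c_\ee u}}\ (k-1)\,k\,\frac{x}{u}\,\Prob\big[\mathrm{Bin}(u,q)=\tfrac{u+x}{2}\big].
\end{equation*}

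It remains a routine large-deviation estimate. Since $x\le u$ on the support (otherwise $\INF=0$), the factor $x/u\le1$. The mean of $\mathrm{Bin}(u,q)$ is $uq=\tfrac{u}{2}(1+\tfrac1\lambda)$, and $\tfrac{u+x}{2}-uq=\tfrac12(x-u/\lambda)$, so the admissible values of $\tfrac{u+x}{2}$ are precisely those at distance $\ge c_\ee u$ from the mean. Hence for each fixed $u$ the inner sum is at most $(k-1)k\,\Prob\big[|\mathrm{Bin}(u,q)-uq|\ge c_\ee u\big]\le 2(k-1)k\,e^{-2c_\ee^2 u}$ by Hoeffding's inequality, and summing the geometric series over $u\ge1$ gives $|f(\0,T)-E(\0,T)|\le\frac{2(k-1)k}{e^{2c_\ee^2}-1}$, which depends only on $k$ and $\ee$ and is independent of $T$ and of the initial configuration; this proves \thmref{conv}. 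The step I expect to require the most care is the regrouping in the first paragraph: one must check that a full rotor cycle cancels no matter what the rotor offset $\sum_{t<s}f(\x,t)\bmod k$ is at the start of round $s$, and must keep the bipartite parity constraint straight so that influence terms with $u\not\equiv x\pmod 2$ are genuinely absent rather than merely small; the concentration bound afterwards is entirely standard.
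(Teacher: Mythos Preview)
Your proof is correct, and it takes a genuinely different route from the paper's. The paper bounds each $\CON(\x)$ by first removing the blocks of indices corresponding to times in the ``forbidden strip'' (which cancel by the hypothesis), then invoking \lemref{INFunimodal} to see that $i(x,t)/k^t$ is monotone on each remaining half, and finally applying \lemref{monotone} to reduce $\CON(\x)$ to just the two boundary values $i(x,\lfloor(1-\ee)\lambda x\rfloor)/k^{\lfloor(1-\ee)\lambda x\rfloor}$ and $i(x,\lceil(1+\ee)\lambda x\rceil)/k^{\lceil(1+\ee)\lambda x\rceil}$; summing these over the sphere of radius $x$ and over $x$ is then handled by a Stirling computation showing a geometric decay with an explicit base $p(\ee')<1$.

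You instead bound each round's residual crudely by $(k-1)\,\INF(x,-1,u)$, sum first over the sphere and then over $x$ to recognise the result as a sum of binomial point masses in the tail $|{\rm Bin}(u,q)-uq|\ge c_\ee u$, and finish with Hoeffding's inequality. Your per-vertex bound is looser (many terms instead of two), but by passing to the binomial tail you sidestep the unimodality lemma, the alternating-sum lemma, and the Stirling/calculus verification that $p(\ee')<1$. The probabilistic argument is shorter and arguably more transparent; the paper's argument, on the other hand, reuses the machinery already set up for \thmref{div} and \thmref{num} and yields a sharper per-sphere estimate.
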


Our aim is to bound \eq{basic1}.  To that end, we further examine
$\INF$.
From its definition and \eqs{n}{i} we know for $x,t>0$
\begin{align*}
    \INF(x,-1,t) &= \frac{i(x,t)}{k^t},\\
    \INF(x,+1,t) &= \frac{k\,n(x+1,t-1)-n(x,t)}{k^t}\\
                 &= \frac{\frac{1}{k-1} n(x,t) - \frac{k}{k-1} n(x+1,t-1)}{k^t}\\
                 &= \frac{-i(x,t)}{(k-1)\,k^t}.
\end{align*}
This also shows
\begin{equation}
    \INF(x,-1,t) + (k-1)\,\INF(x,1,t) = 0.
    \label{eq:INFfack}
\end{equation}
Therefore, the absolute value of the influence $|\INF(x,A,t)|$
of sending one chip \emph{towards} \0 is $(k-1)$ times
larger than sending one chip in the opposite direction.

Note that
\begin{equation*}
    \CON(\x) \ =\ \sum_{\substack{i\geq 0,\\ s_i(\x) < T}} \ax{i} \frac{i(|\x|,t_i)}{k^{t_i}}
\end{equation*}
with
\begin{align*}
    t_i &:= T-s_i(\x) \\
    \ax{i} &:=
        \begin{cases}
            \frac{-1}{k-1} & \text{for $\NEXT^i(\ARR(\x,0))=+1$}\\
            1              & \text{for $\NEXT^i(\ARR(\x,0))=-1$.}
        \end{cases}
\end{align*}

\noindent
To bound this alternating sum, we use the following elementary fact.
\begin{lem}\label{lem:monotone}
  Let $f\colon X \to \R$ be non-negative and monotone nondecreasing with $X\subseteq\R$.
  Let $\ax{0}, \ldots, \ax{n} \in \R$ and
  $t_0, \ldots, t_n \in X$ such that $t_0 \leq \ldots \leq t_n$
  and
  $|\sum_{i=a}^{b} \ai| \leq 1$ for all $0\leq a\leq b\leq n$.
  Then
    \[\bigg| \sum_{i = 0}^n \ai f(t_i)\bigg| \ \le\  \max_{x \in X} f(x).\]
\end{lem}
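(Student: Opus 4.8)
The plan is to use Abel summation (summation by parts) to convert the alternating sum $\sum_i \ai f(t_i)$ into a telescoping-type expression involving the partial sums $S_j := \sum_{i=0}^{j} \ai$, whose absolute values are controlled by the hypothesis, and the nonnegative increments $f(t_{i+1}) - f(t_i)$, which are nonnegative because $f$ is monotone nondecreasing and $t_0 \le \cdots \le t_n$. First I would write $\sum_{i=0}^{n} \ai f(t_i) = \sum_{i=0}^{n} (S_i - S_{i-1}) f(t_i)$ with the convention $S_{-1} = 0$, and rearrange to obtain $\sum_{i=0}^{n-1} S_i \big( f(t_i) - f(t_{i+1}) \big) + S_n f(t_n)$.

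The key point is then a sign/magnitude estimate. The hypothesis $|\sum_{i=a}^{b} \ai| \le 1$ applied with $a=0$ gives $|S_i| \le 1$ for every $i$, so each term $|S_i| \cdot |f(t_i) - f(t_{i+1})| \le f(t_{i+1}) - f(t_i)$, and these nonnegative quantities telescope: $\sum_{i=0}^{n-1} \big( f(t_{i+1}) - f(t_i) \big) = f(t_n) - f(t_0)$. Combining with $|S_n f(t_n)| \le f(t_n)$ would naively give a bound of $2 f(t_n) - f(t_0)$, which is too weak. To get the sharp bound $\max_{x\in X} f(x)$, I would instead observe that all the increments $f(t_i)-f(t_{i+1})$ are $\le 0$, so in the rearranged sum $\sum_{i=0}^{n-1} S_i (f(t_i)-f(t_{i+1})) + S_n f(t_n)$ one can bound $S_i$ above by $1$ on the terms where $f(t_i)-f(t_{i+1}) \le 0$ wants $S_i$ as small as possible; more cleanly, use the general identity $\sum_{i=0}^n \ai f(t_i) = \sum_{i=0}^{n-1} (S_i)(f(t_i)-f(t_{i+1})) + S_n f(t_n)$ and then bound via $f(t_n) = f(t_0) + \sum_{i=0}^{n-1}(f(t_{i+1})-f(t_i))$, writing everything in terms of the nonnegative gaps $g_i := f(t_{i+1})-f(t_i) \ge 0$ and $f(t_0) \ge 0$. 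One then gets $\sum_i \ai f(t_i) = S_n f(t_0) + \sum_{i=0}^{n-1}(S_n - S_i) g_i$, and since $S_n - S_i = \sum_{j=i+1}^{n} \aj$ has absolute value $\le 1$ by hypothesis (with $a = i+1$, $b = n$), while also $|S_n| \le 1$, we get $\big|\sum_i \ai f(t_i)\big| \le f(t_0) + \sum_{i=0}^{n-1} g_i = f(t_n) \le \max_{x\in X} f(x)$.

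The main obstacle is precisely getting the constant right: a careless application of Abel summation yields a factor of $2$, and the trick is to choose the ``base point'' of the summation by parts at the \emph{last} index rather than the first, so that the coefficients appearing are the \emph{tail} sums $S_n - S_i$ rather than the \emph{head} sums $S_i$, together with the single term $S_n f(t_0)$; both kinds of coefficients are bounded by $1$ in absolute value by the hypothesis, and the nonnegative quantities $f(t_0), g_0, \ldots, g_{n-1}$ sum to exactly $f(t_n)$. (One should also dispose of the trivial degenerate cases, e.g.\ $n$ such that the sum is empty, or $X = \emptyset$, where the statement holds vacuously or trivially.) No deeper idea is needed; once the correct telescoping is set up the estimate is immediate.
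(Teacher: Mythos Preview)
Your argument is correct. The paper does not actually give a proof of this lemma; it is stated as an ``elementary fact'' and left to the reader. Your Abel summation argument, in the refined form you settle on at the end, is exactly the right way to justify it: writing
\[
\sum_{i=0}^{n} \ai f(t_i) \;=\; S_n\, f(t_0) \;+\; \sum_{i=0}^{n-1} (S_n - S_i)\, g_i,
\]
with $g_i = f(t_{i+1}) - f(t_i) \ge 0$ and $f(t_0) \ge 0$, and then using $|S_n| \le 1$ and $|S_n - S_i| = \big|\sum_{j=i+1}^n A^{(j)}\big| \le 1$ (the hypothesis with $a = i+1$, $b = n$) yields the bound $f(t_n) \le \max_{x \in X} f(x)$ directly. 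The discussion of the naive bound $2f(t_n) - f(t_0)$ could be trimmed, since you immediately supersede it, but there is no gap. One tiny cosmetic point: the statement implicitly assumes the maximum is attained (or should be read as a supremum); your bound $f(t_n)$ is in any case finite and at most $\sup_{x\in X} f(x)$, so this does not affect the argument.
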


\noindent
Let $X \subseteq \R$. We call a
mapping $f\colon X \to \R$ \emph{unimodal}, if there is a $t_1 \in X$ such that
$f|_{x \le t_1}$ as well as $f|_{x \ge t_1}$ are monotone.
The following lemma shows that $\INF(x,A,t)$ is unimodal.

\begin{lem}\label{lem:INFunimodal}
    The function $i(x,t)/k^t$ is unimodal in $t$ for all $x$ and $k$, and it is
    maximized over all $t\in\N$ at some $t_{\max}(x)=\lambda x + c_x$ with $|c_x|\leq 15$.
\end{lem}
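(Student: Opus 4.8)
The plan is to work from the closed form \eqref{eq:i}, namely $i(x,t)=(k-1)^{(\frac{t-x}{2}+1)}\frac{x}{t}\binom{t}{(t+x)/2}$, which we extend to real $t>0$ via the Gamma function. Dividing by $k^t$ and taking logarithms, I would set $g(t):=\ln\big(i(x,t)/k^t\big)$ and study the sign of $g'(t)$. Writing the binomial coefficient with Gamma functions, $\binom{t}{(t+x)/2}=\Gamma(t+1)/\big(\Gamma(\tfrac{t+x}{2}+1)\Gamma(\tfrac{t-x}{2}+1)\big)$, the derivative $g'(t)$ becomes a linear combination of Digamma values:
\begin{equation*}
  g'(t) = \tfrac12\ln(k-1) - \ln k - \tfrac1t + \Psi(t+1) - \tfrac12\Psi\big(\tfrac{t+x}{2}+1\big) - \tfrac12\Psi\big(\tfrac{t-x}{2}+1\big).
\end{equation*}
The first claim — unimodality — will follow once I show $g'$ is monotone decreasing in $t$, since then $g'$ changes sign at most once (from $+$ to $-$), so $i(x,t)/k^t$ increases then decreases. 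Monotonicity of $g'$ reduces to showing $h(t):=\Psi(t+1)-\tfrac12\Psi(\tfrac{t+x}{2}+1)-\tfrac12\Psi(\tfrac{t-x}{2}+1)-\tfrac1t$ has negative derivative; this is $\Psi'(t+1)-\tfrac14\Psi'(\tfrac{t+x}{2}+1)-\tfrac14\Psi'(\tfrac{t-x}{2}+1)+\tfrac1{t^2}<0$, which I would establish using the series representation $\Psi'(z)=\sum_{n\ge0}(z+n)^{-2}$ together with convexity (the map $z\mapsto\Psi'(z)$ is convex, so $\tfrac12\Psi'(a)+\tfrac12\Psi'(b)\ge\Psi'(\tfrac{a+b}{2})$ applied to $a=\tfrac{t+x}{2}+1$, $b=\tfrac{t-x}{2}+1$, whose midpoint is $\tfrac t2+1$), reducing the inequality to a one-variable estimate comparing $\Psi'(t+1)+t^{-2}$ with $\tfrac12\Psi'(\tfrac t2+1)$, which one checks directly via the integral/series forms.

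For the location of the maximum, I would use the asymptotic $\Psi(z)=\ln z - \tfrac1{2z}+O(z^{-2})$ in the equation $g'(t_{\max})=0$. Substituting gives, up to lower-order terms,
\begin{equation*}
  0 \approx \tfrac12\ln(k-1)-\ln k + \ln t - \tfrac12\ln\tfrac{t+x}{2} - \tfrac12\ln\tfrac{t-x}{2} = \tfrac12\ln(k-1)-\ln k + \tfrac12\ln\frac{4t^2}{t^2-x^2},
\end{equation*}
i.e. $\frac{k^2}{k-1}=\frac{4t^2}{t^2-x^2}$, which solves to $t^2 = \frac{k^2 x^2}{(k-2)^2}$, that is $t=\lambda x$ with $\lambda=\tfrac{k}{k-2}$. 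To get the explicit bound $|c_x|\le 15$ I would plug $t=\lambda x + c$ into the exact expression for $g'$, use the explicit two-sided bounds on $\Psi$ coming from the Stirling bounds already recorded in \eqref{eq:stirling}/\eqref{eq:binom} (equivalently $|\Psi(z)-\ln z|\le \tfrac1z$-type estimates derivable from them), and show $g'(\lambda x + 15)<0<g'(\lambda x - 15)$ for all $x$, handling small $x$ (where the asymptotics are weakest) by noting $t\in\N$ and checking the finitely many cases, and large $x$ by the asymptotic error bound. Because $g'$ is monotone decreasing, these two sign facts pin $t_{\max}(x)$ into the interval $(\lambda x-15,\lambda x+15)$, and rounding to the maximizing integer keeps $|c_x|\le 15$.

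The main obstacle is the second part: converting the clean asymptotic identity $t_{\max}\sim\lambda x$ into the uniform-in-$x$ constant bound $|c_x|\le 15$. The asymptotic expansion of $\Psi$ has error terms that are only $O(z^{-2})$, and near $x=1,2,\dots$ the arguments $\tfrac{t-x}{2}+1$ are small, so the crude bounds are loose exactly where we need them; the fix is to combine the explicit Stirling-derived inequalities with a direct check of the (finitely many) small-$x$ cases, and to be slightly generous with the constant — $15$ is comfortably larger than what the leading-order analysis suggests, which is presumably why the authors chose it. Everything else (the Digamma convexity, the sign-change argument) is routine once the closed form \eqref{eq:i} is in hand.
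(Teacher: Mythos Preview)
Your approach is genuinely different from the paper's, and it contains a real gap.

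\textbf{What the paper does.} The paper never passes to the continuous extension or to Digamma functions. It computes the discrete forward difference $\frac{i(x,t+2)}{k^{t+2}}-\frac{i(x,t)}{k^t}$ directly, using the elementary identity $\binom{n+2}{m+1}=\frac{(n+1)(n+2)}{(m+1)(n-m+1)}\binom{n}{m}$. This difference factors as a positive quantity times a quadratic polynomial $p_x(t)$ in $t$ with negative leading coefficient $-(k-1)(k-2)^2$, so it changes sign exactly once; unimodality follows. The unique positive root of $p_x$ is an explicit closed-form expression in $x$ and $k$, from which the bound $|c_x|\le 15$ drops out by elementary estimates. The whole argument is a few lines of algebra.

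\textbf{Where your argument breaks.} Your plan is to show that $g'(t)$ is monotone decreasing, i.e.\ that $g$ is concave, and you reduce this via convexity of $\Psi'$ to the one-variable inequality
\[
\Psi'(t+1)+\frac{1}{t^2}\ <\ \tfrac{1}{2}\,\Psi'\!\big(\tfrac{t}{2}+1\big).
\]
This inequality is false; for instance at $t=2$ the left side is $\Psi'(3)+\tfrac14\approx 0.645$ while the right side is $\tfrac12\Psi'(2)\approx 0.322$. More fundamentally, the target statement itself is false: $g$ is \emph{not} concave on all of $t>x$. Expanding $\Psi'(z)=\frac{1}{z}+\frac{1}{2z^2}+O(z^{-3})$ one finds $g''(t)=\frac{t-x^2}{t^3}+O(t^{-3})$ for $t\gg x$, so $g''(t)>0$ once $t$ exceeds roughly $x^2$. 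What actually happens is that $g'$ starts positive, decreases through zero, and then increases again but only toward the strictly negative limit $\ln\!\big(2\sqrt{k-1}/k\big)<0$; hence $g'$ still has a unique zero and unimodality holds, but not for the reason you give. Your argument could be salvaged by proving this weaker sign pattern for $g'$ directly (e.g.\ by analyzing the equation $g'(t)=0$ rather than the sign of $g''$), but as written the concavity step is incorrect. The paper's discrete-ratio computation sidesteps this entirely and yields the explicit root, and hence the constant $15$, essentially for free.
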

\begin{proof}
    Using $\binom{n+2}{k+1} = \frac{n^2+3n+2}{(k+1)\,(n-k+1)}\,\binom nk$ we get
    \begin{align*}
        \frac{i(x,t+2)}{k^{t+2}} - \frac{i(x,t)}{k^{t}} =
              \frac{(k-1)^\frac{t-x}{2} \, x \, p_x(t) }
                      {(t+x+2) \, (t-x+2) \, k^{t+2} \, t}
                 \, \binom{t}{\frac{t+x}{2}}
    \end{align*}
    with
    $
      p_x(t) :=
           - (k^3 - 5k^2 +8k -4)t^2
           - (4k^3 - 8k^2+8k-4)t
           + (k^3-k^2) (x^2-4)
    $.
    Hence, the above difference is non-negative if
    \begin{align}
        \label{eq:pxt}
        t\geq
        \big( \sqrt{8k^3 - 4k^2 - 8k + 4 + k^2 \, (k-2)^2 \, x^2}-
        2k^2 + 2k - 2 \big) \, \big/ \,
        {(k-2)^2}
    \end{align} and non-positive otherwise.
    Thus we have unimodality with $i(x,t)/k^t$ taking its maximum
    when $t$ is the smallest integer having the same parity as $x$ and satisfying \eq{pxt}.
    Using $k\geq 3$, the claim follows.
\end{proof}

Armed with \lemrefs{monotone}{INFunimodal}, we can now prove \thmref{conv}.

\begin{proof}[Proof of Thm.~\ref{thm:conv}]
    By \lemrefs{monotone}{INFunimodal} we know
    \begin{equation*}
    \CON(\x) \leq  \frac{i(|\x|,\lfloor(1-\ee) \lambda |\x|\rfloor)}
                        {k^{\lfloor(1-\ee) \lambda |\x|\rfloor}} +
                   \frac{i(|\x|,\lceil(1+\ee) \lambda |\x|\rceil)}
                        {k^{\lceil(1+\ee) \lambda |\x|\rceil}}.
    \end{equation*}
    By \thmref{main} it therefore remains to show that
    \begin{align*}
        f(\0&,T) - E(\0,T) \\
            &\leq  \sum_{x>0} \left( \frac{(k-1)^{x-1}}{k^{\lfloor(1-\ee) \lambda x\rfloor - 1}} \,
                                     i(x,\lfloor(1-\ee) \lambda x\rfloor) \right . \\
            & \qquad \qquad \left . +
                              \frac{(k-1)^{x-1}}{k^{\lceil(1+\ee) \lambda x\rceil - 1}} \,
                                     i(x,\lceil(1+\ee) \lambda x\rceil) \right)
    \end{align*}
    is bounded.  We now show that the second summand is bounded, the first one can be handled
    analogously.  For this, we choose $\ee'$ such that 
    $(1+\ee') \lambda x=\lceil(1+\ee) \lambda x\rceil$.
    By \eq{i},
    \begin{align*}
        &\frac{(k-1)^{x-1}}{k^{(1+\ee') \lambda x - 1}} \, i(x,(1+\ee') \lambda x)\\
            &\ \ = \frac{(k-1)^{((1+\ee') \lambda+1) x/2}\, x}{k^{(1+\ee') \lambda x - 1} ((1+\ee') \lambda x)}
             \binom{(1+\ee') \lambda x}{\left((1+\ee') \lambda+1\right) x/2}
    \end{align*}
    
    \noindent
    With \eq{binom} we get,
    \begin{align*}
        &\frac{(k-1)^{x-1}}{k^{(1+\ee') \lambda x - 1}} \, i(x,(1+\ee') \lambda x)\\
            &\ \ \ \ < \sqrt{\frac { k \, (k-2)^3  }
                           { (1+\ee') (k \ee' + 2) (2\,k-2+k\ee') }}
               \,p(\ee')^{\frac{x}{k-2}}
    \end{align*}
    with
    \begin{align*}
    p(\ee'):=
            &\frac{2k-2+k\ee'}{(k \ee' + 2)(k-1)}
            \left(\frac{k-1}{(k\ee'+2) (2k-2+k\ee')}\right)^{k\ee'/2}\\
            &\left(\frac{(k-1) \, (2+2\ee')^{\ee'+1}}{2k-2+k\ee'}\right)^k
    \end{align*}
    As $0<p(\ee')<1$ for all $\ee'>0$ and $k\geq3$, we have shown that
    $\sum_{x>0} \frac{(k-1)^{x-1}}{k^{(1+\ee') \lambda x - 1}} \, i(x,(1+\ee') \lambda x)$
    is bounded above by a constant depending on $\ee'>0$.  The same arguments demonstrate that
    $\sum_{x>0} \frac{(k-1)^{x-1}}{k^{\lfloor(1-\ee) \lambda x\rfloor - 1}}\, i(x,\lfloor(1-\ee) \lambda x\rfloor)$
    can also be bounded above by a constant depending on $\ee>0$, which finishes the proof.
\end{proof}


\section{Number of Chips}

In the previous sections, we have shown that the discrepancy on a single vertex can be arbitrarily high provided we use a sufficient amount of time. However, a closer look at the proof also reveals that, assuming $k$ to be constant, to obtain a discrepancy of $D$, at least $\exp(\Omega(D^2))$ vertices have to contribute (by holding a number of chips that is not a multiple of $k$ at a suitable time). We now discuss that also $\exp(\Omega(D^2))$ chips are necessary to obtain a discrepancy of $D$. 

\begin{thm}
    \label{thm:num}
    Let $k \ge 3$ be a constant. Then all single vertex discrepancies arising from an even initial configuration
    with $\kappa$ chips are bounded by $O(\sqrt{\log \kappa})$.
\end{thm}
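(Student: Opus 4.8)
The plan is to bound the discrepancy $f(\0,T)-E(\0,T)=\sum_{\x\in V}\CON(\x)$ from Theorem~\ref{thm:main} by exploiting the fact that only vertices holding a non-multiple of $k$ at some time can have a nonzero contribution, and that few such vertices are available when the total number of chips $\kappa$ is small. First I would observe that the influence $\INF(x,A,t)$ is dominated in absolute value by $\max_t i(x,t)/k^t$, which by Lemma~\ref{lem:INFunimodal} is maximized near $t=\lambda x$; using the Ballot-number formula \eqref{eq:i} together with the Stirling bounds \eqref{eq:binom}, one gets an estimate of the shape $|\CON(\x)| = O\!\big((k-1)^{-c|\x|}\big)$ for some constant $c=c(k)>0$ --- more precisely, the per-vertex contribution of a vertex at distance $x$ is at most $(k-1)^{x-1}k^{-(t_x-1)}i(x,t_x)$ which decays geometrically in $x$ just as in the computation leading to \eqref{eq:sii}. (Here we also use Lemma~\ref{lem:monotone} to fold the alternating sum over the chips at a single vertex into a single term bounded by the unimodal peak.)

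Next I would set up the counting argument. Let $r$ be a radius to be chosen. Split $V$ into the ball $B_r=\{\x : |\x|\le r\}$ and its complement. For the complement, the geometric decay from the previous paragraph makes $\sum_{|\x|>r}|\CON(\x)|$ a convergent geometric tail, bounded by something like $C\,(k-1)^{-cr}$, hence $O(1)$ for any $r\ge 0$ and in fact $\to 0$ as $r\to\infty$. For the ball $B_r$, I would argue that the number of vertices in $B_r$ that can possibly contribute is at most $\kappa$ (each chip sits on one vertex at time $0$, the configuration is even, and a vertex with all piles $\equiv 0 \pmod k$ contributes nothing) --- actually even simpler: at most $\kappa$ vertices carry any chips at all at time $0$, and in the configurations that matter those are the only sources. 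Each such vertex contributes at most $\max_{x\le r}\big[(k-1)^{x-1}k^{-(t_x-1)}i(x,t_x)\big]$, and from \eqref{eq:sii}--\eqref{eq:binom} this is $O(1/\sqrt{x})=O(1)$ uniformly. So $|\sum_{\x\in B_r}\CON(\x)| = O(\kappa)$, which is far too weak; the point is instead to choose $r$ so that $B_r$ contains \emph{no} non-multiples of $k$ beyond what $\kappa$ chips can support, i.e. to balance the two regimes.

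The right balance: if a discrepancy $D$ is achieved, then by the tail estimate the contribution from $|\x|>r$ is at most $\tfrac{D}{2}$ once $(k-1)^{-cr}$ is small enough, i.e. once $r = \Omega(\log D)$. So at least $\tfrac D2$ of the discrepancy comes from $B_r$ with $r=O(\log D)$. But a vertex at distance $x$ contributes at most $O(1)$, and more importantly only vertices carrying chips contribute; since the ball $B_r$ has at most $(k-1)^{r}=\exp(O(\log D))=D^{O(1)}$ vertices, this by itself only gives $D=O(\mathrm{poly})$, not the claimed $\kappa=\exp(\Omega(D^2))$. To get the quadratic exponent one must use the decay in $x$ more carefully: the contribution of the shell at distance $x$ is $O(1/\sqrt{x})$ per occupied vertex, so to amass discrepancy $D$ from shells up to radius $r$ one needs roughly $\sum_{x\le r} 1/\sqrt{x}\asymp \sqrt r \gtrsim D$, forcing $r=\Omega(D^2)$; and then \emph{every} shell $x\le r$ must be essentially fully occupied (a constant fraction of its $\Theta((k-1)^x)$ vertices must carry a non-multiple of $k$), so $\kappa \ge \sum_{x\le r}\Theta((k-1)^x) = \exp(\Omega(r)) = \exp(\Omega(D^2))$. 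Inverting, $D=O(\sqrt{\log\kappa})$.

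The main obstacle is making the third paragraph rigorous: one must show not just that far shells contribute little, but that a \emph{sparse} configuration cannot exploit near shells either --- formally, that $\sum_{\x\in B_r}|\CON(\x)|$ for any set of at most $\kappa$ occupied vertices is $O(\sqrt{\log\kappa})$ rather than $O(\kappa)$. This requires pairing the geometric growth $(k-1)^x$ of shell sizes against the geometric decay of per-vertex influence, so that to realize discrepancy $D$ one is \emph{forced} to occupy almost all of a ball of radius $\Omega(D^2)$, hence to use $\exp(\Omega(D^2))$ chips. Concretely I would let $N_x$ be the number of occupied (mod-$k$-nontrivial) vertices at distance $x$, bound $\sum_x N_x\cdot[\text{per-vertex peak at }x]$ subject to $N_x\le\min\{(k-1)^{x-1}\cdot\text{(branching)},\ \kappa\}$ and $\sum_x N_x\le\kappa$, and optimize: the optimum puts all chips on the largest feasible ball, giving the $\sqrt{\log\kappa}$ bound by the same Stirling/Ballot estimates used for Theorem~\ref{thm:div}. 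The convergence estimates of Section~\ref{sec:conv} (the function $p(\ee')<1$) supply exactly the geometric-decay ingredient needed to make the tail and the optimization converge.
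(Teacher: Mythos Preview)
Your proposal has a genuine gap in the way you count contributing vertices, and it stems from a miscalculation of the per-shell contribution.

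First, the claim in your second paragraph that $\sum_{|\x|>r}|\CON(\x)|$ is a convergent geometric tail is false. The per-vertex bound is indeed $|\CON(\x)|\le 2\max_t i(|\x|,t)/k^t = O(|\x|^{-1/2}(k-1)^{-|\x|})$, but there are $\Theta((k-1)^{x})$ vertices on the sphere $S_x$, so the exponential factors cancel and the whole shell contributes $\CON(S_x)=O(x^{-1/2})$, exactly as in \eqref{eq:sii}. The tail $\sum_{x>r}x^{-1/2}$ diverges, so you cannot simply discard it. (You confuse the per-vertex quantity with the per-shell quantity when you write that $(k-1)^{x-1}k^{-(t_x-1)}i(x,t_x)$ ``decays geometrically in $x$''; it is $\Theta(x^{-1/2})$.)

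Second, your optimisation over $N_x$ with the constraint $\sum_x N_x\le\kappa$ is not justified. You take $N_x$ to be the number of vertices at distance $x$ that are ever non-trivial modulo $k$, but chips move: a configuration with $\kappa$ chips can make far more than $\kappa$ distinct vertices non-trivial over the course of $T$ steps. What \emph{is} bounded by $\kappa$ is the number of chips present at any \emph{fixed} time. The paper exploits exactly this: at each time $t$ there are $\le\kappa$ chips, each contributing at most $\max_s i(x,s)/k^s=O(x^{-1/2}(k-1)^{-x})$ to $\CON(S_x)$; and \thmref{conv} (not just the function $p(\ee')<1$ as a tail estimate, but as a bound on the \emph{time window} that matters) shows that only $O(x)$ time steps contribute more than a constant in total. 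This yields a second, chip-sensitive bound $\CON(S_x)=O(\kappa\, x^{1/2}(k-1)^{-x})$, which \emph{does} decay geometrically. Splitting at $x=\Theta(\log\kappa)$ and using $O(x^{-1/2})$ for small $x$ and the chip-sensitive bound for large $x$ gives $O(\sqrt{\log\kappa})$ for each part.

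So the missing idea is to count chip-timestep pairs rather than ever-occupied vertices, and to use \thmref{conv} to cap the number of relevant timesteps per shell at $O(x)$.
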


\begin{proof}
Let us examine the discrepancy at the origin after $T$ time steps, starting with an arbitrary even initial configuration which 
uses only $\kappa$ chips.  We  bound the contribution of each  \emph{sphere}
$S_x:=\{ \x\in V \mid |\x|=x \}$, $x \in \N_0$, separately.

\lemref{INFunimodal} shows that $\INF(x,-1,\cdot)$ is unimodal.
By \lemref{monotone}, this gives
\begin{align}
    \CON(S_x) &:= \sum_{\x\in S_x} \CON(\x) \notag\\
               &\leq \sum_{\x\in S_x} \max_t \INF(|\x|,-1,t) \notag\\
                 &= k (k-1)^{x-1} \max_t \frac{i(x,t)}{k^{t}} \label{eq:sph1}
\end{align}

\noindent
By \eq{i} and 
\lemref{INFunimodal}, which showed that $i(x,t)/k^t$ is maximized
at $t=\lambda x+c_x$ with $|c_x|\leq 15$, we obtain
\begin{align*}
    \max_t \frac{i(x,t)}{k^{t}} 
        =&\ \frac{i(x,\lambda x+c_x)}{k^{\lambda x+c}} \\
        =&\  x\,
            k^{-\lambda x-c_x}\,
            (k-1)^{\frac{(\lambda-1)x+c_x}{2}+1}\,
            (\lambda x+c_x)^{-1}
            \binom{\lambda x+c_x}{\frac{(\lambda+1)x+c_x}2 }. 
\end{align*}
Since both $|c_x|$ and $\lambda=k/(k-2)$ are bounded by absolute constants, we  have
\begin{align*}
    &\binom{\lambda x+c_x}{\frac{(\lambda+1)x+c_x}2 } \Big/
    \binom{\lambda x}{\frac{(\lambda+1)x}2 }\\
    &\ \ \ \ = \frac{ (\lambda x+c_x) \cdot \ldots \cdot (\lambda x+1) }
           { \big[(\frac{\lambda+1}2 x+\frac {c_x} 2) \cdot \ldots \cdot (\frac{\lambda+1}2 x+1)\big] \cdot
             \big[(\frac{\lambda-1}2 x+\frac {c_x} 2) \cdot \ldots \cdot (\frac{\lambda-1}2 x+1)\big]
           }
    =O(1).
\end{align*}
Now \eq{binom} and using that $k$ is seen as constant yields
\begin{align}
    \max_t \frac{i(x,t)}{k^{t}}
        =&\ O\left(
            \frac{ (k-1)^{\frac{(\lambda-1)x}{2}} }
                 { k^{\lambda x}} \,
            \binom{\lambda x}{\frac{(\lambda+1)x}2 }\right) \notag\\
        =&\ O\left(
            \frac{ (k-1)^{\frac{1}{k-2}x} }
                 { k^{\frac{k}{k-2} x} }\,
            \frac{k^{\frac{k}{k-2}x+\frac{1}{2}} \, (k-2)^\frac{1}{2}}
                 {(k-1)^{\frac{k-1}{k-2}x+\frac{1}{2}}\,\sqrt{x}}\right)\notag\\
        =&\ O(x^{-1/2} (k-1)^{-x}). \label{eq:sph2}
\end{align}
From \eqs{sph1}{sph2} we conclude
\begin{align}
    \CON(S_x) &= O(x^{-1/2}). \label{eq:sph3}
\end{align}

\noindent
For large $x$, there are not enough chips such that each vertex on $S_x$ contribute fully to the discrepancy. We use this to obtain a second bound  for $\CON(S_x)$.
\thmref{conv} implies that the sum of all contributions $\CON(S_x)$ for all time steps~$t$
with $T-t >\frac12 \lambda x$ or $T-t < \frac32 \lambda x$ is bounded by a constant $C_k$.
Hence it suffices to examine times $t$ with $\frac12 \lambda x \le T-t \le \frac32 \lambda x$.
In every time step, each chip in $S_x$ can contribute only at most $O( x^{-1/2} k^{-x})$
by \eq{sph2}.
As there are $\kappa$ chips and $\lambda x$ possible time slots, the total 
contribution of $S_x$ in the time interval $\frac12 \lambda x < T-t < \frac32 
\lambda x$ is 
\begin{equation}
    O(\kappa  x^{1/2}\, k^{-x}).
    \label{eq:sph4}
\end{equation}

\noindent
With \thmref{main}, the two bounds of \eqs{sph3}{sph4} now yield
\begin{align*}
    f(\0,T) - E(\0,T)
        &\leq \sum_{x>0} \CON(S_x)\\
        &=    \sum_{1\leq x\leq\log_k(\kappa)} \CON(S_x) + \sum_{x>\log_k(\kappa)} \CON(S_x) \\
        &\le    \sum_{1\leq x\leq\log_k(\kappa)} O(x^{-1/2}) + C_k +
              \sum_{x>\log_k(\kappa)} O(\kappa \, x^{1/2}\, k^{-x}) \\
        &=    O(\sqrt{\log \kappa}) +
              O(\sqrt{\log \kappa}),
\end{align*}
still assuming $k$ to be a constant.  
\end{proof}


\section{Conclusion}

In this paper we showed that $k$-ary trees ($k \ge 3$) do not 
admit a constant bound on the single vertex discrepancies. Nevertheless, also on these trees the Propp machine is a very good simulation of the random walk. For simultaneous walks of $\kappa$ chips, the discrepancies are bounded by $O(\sqrt{\log \kappa})$. 

With this work showing that infinite regular trees do not have discrepancies bounded by a constant, but previous work of  
\citet{CooperSpencer} showing this property for higher-dimensional grids, the natural open problem arising from this work is to give more insight to the question of which graphs  display the one or the other behavior, or ideally, to 
give a characterization of those graphs having constant bounds for the single 
vertex discrepancies.

\section{Acknowledgements}

The third author wishes to thank Philippe Flajolet for some discussions on the 
asymptotics of $n(x,t)$.

\end{document}